\numberwithin{equation}{section}
\theoremstyle{plain}
\newtheorem{thm}{Theorem}[section]
\newtheorem{lem}[thm]{Lemma}
\newtheorem{cor}[thm]{Corollary}
\newtheorem{prop}[thm]{Proposition}
\newtheorem{ques}[thm]{Question}
 \theoremstyle{definition}
\newtheorem{defn}[thm]{Definition}
\newtheorem{rem}[thm]{Remark}
\newtheorem{ex}[thm]{Example}
\newcommand{\bm}[1]{\mathbf{#1}}
\newcommand{\mb}[1]{\mathbb{#1}}
\newcommand{\mf}[1]{\mathfrak{#1}}
\newcommand{\Char}{\operatorname{char}}
\newcommand{\rank}{\operatorname{rank}}
\newcommand{\Jac}{\operatorname{Jac}}
\newcommand{\Bez}{\operatorname{B\acute ez}}
\newcommand{\rBez}{\overline{\operatorname{B\acute ez}}}
\newcommand{\Mod}{\ \mathrm{mod}\ }
\newcommand{\pcoor}[1]{%
  \begingroup\lccode`~=`: \lowercase{\endgroup
  \edef~}{\mathbin{\mathchar\the\mathcode`:}\nobreak}%
  [
  \begingroup
  \mathcode`:=\string"8000
  #1%
  \endgroup 
  ]
}
\begin{document}
\title{B\'ezoutians and injectivity of polynomial maps}

\author{Stephen McKean}

\address{Department of Mathematics\\
Harvard University\\
1 Oxford Street\\
Cambridge, MA 02138} 

\email{smckean@math.harvard.edu}
\urladdr{shmckean.github.io}

\subjclass[2020]{13M10, 14R15}

\begin{abstract}
We prove that an endomorphism $f$ of affine space is injective on rational points if its B\'ezoutian is constant. Similarly, $f$ is injective at a given rational point if its reduced B\'ezoutian is constant. We also show that if the Jacobian determinant of $f$ is invertible, then $f$ is injective at a given rational point if and only if its reduced B\'ezoutian is constant.
\end{abstract}

\maketitle

\section{Introduction}
Let $k$ be a field, and let $f=(f_1,\ldots,f_n)\colon\mb{A}^n_k\to \mb{A}^n_k$ be a polynomial morphism. In this note, we study the injectivity of $f$ at the origin using the \textit{multivariate B\'ezoutian}.

\begin{defn}\label{def:bezoutian}
Let $\bm{x}=(x_1,\ldots,x_n)$ and $\bm{y}=(y_1,\ldots,y_n)$. The \textit{(multivariate) B\'ezoutian} of $f:=(f_1(\bm{x}),\ldots,f_n(\bm{x}))$ is the determinant
\[\Bez(f):=\det(\Delta_{ij})\in k[\bm{x},\bm{y}],\]
where
\[\Delta_{ij}=\frac{f_i(y_1,\ldots,y_{j-1},x_j,\ldots,x_n)-f_i(y_1,\ldots,y_j,x_{j+1},\ldots,x_n)}{x_j-y_j}.\]
The \textit{reduced B\'ezoutian} of $f$ is $\rBez(f):=\Bez(f)\Mod(f(\bm{x}),f(\bm{y}))$.
\end{defn}

\begin{defn}
Let $R$ be a polynomial ring over a field $k$. Let $I$ be an ideal of $R$. If $I$ is a proper ideal, then $k\subseteq R/I$. We say that an element $c\in R/I$ is \textit{constant} if (i) $I$ is a proper ideal and $c\in k$, or if (ii) $I$ is not proper, in which case $c=0$.
\end{defn}

Multivariate B\'ezoutians generalize the classical B\'ezoutian of a univariate polynomial. They naturally arise in the study of global residues (see e.g.~\cite{SS75,BCRS96}). We will show that $f$ is injective at a $k$-rational point $q$ if $\rBez(f-q)$ is constant.

\begin{thm}\label{thm:main}
Let $k$ be a field, and let $f=(f_1,\ldots,f_n)\colon\mb{A}^n_k\to\mb{A}^n_k$ be a polynomial morphism with finite fibers. Let $q=(q_1,\ldots,q_n)$ be a $k$-rational point of $\mb{A}^n_k$. If $\rBez(f-q)$ is constant, then $|f^{-1}(q)|\leq 1$.
\end{thm}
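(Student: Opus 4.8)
The plan is to assume $|f^{-1}(q)|\ge 2$ and reach a contradiction by combining a single evaluation of $\rBez(f-q)$ with the nonvanishing of the reduced Bézoutian. Replacing $f$ by $f-q$ affects neither the finiteness of the fibers nor the Bézoutian — the constants $q_i$ cancel in every $\Delta_{ij}$, so $\Bez(f-q)=\Bez(f)$ — so we may assume $q=0$. If $f^{-1}(0)$ contains at most one $k$-point there is nothing to prove, so suppose $p\ne p'$ are distinct $k$-points of $\mb{A}^n_k$ with $f(p)=f(p')=0$. Put $A:=k[\bm{x}]/(f_1,\ldots,f_n)$. Since $f(p)=0$, the ideal $(f_1,\ldots,f_n)$ is proper and $A\ne 0$, so $(f(\bm{x}),f(\bm{y}))$ is a proper ideal of $k[\bm{x},\bm{y}]$, the class $\rBez(f)$ lives in $A\otimes_k A=k[\bm{x},\bm{y}]/(f(\bm{x}),f(\bm{y}))$, and "$\rBez(f)$ is constant" means $\rBez(f)=c$ for some $c\in k$.

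The evaluation step is elementary. Since $f(p)=f(p')=0$, substituting $\bm{x}\mapsto p$ and $\bm{y}\mapsto p'$ kills $(f(\bm{x}),f(\bm{y}))$ and so yields a $k$-algebra homomorphism $\op{ev}\colon A\otimes_k A\to k$, under which $c$ maps to $c$ while $\rBez(f)$ maps to $\Bez(f)(p,p')=\det\big((\Delta_{ij}(p,p'))_{i,j}\big)$. On the other hand, the definition of $\Delta_{ij}$ gives the telescoping identity
\[\sum_{j=1}^n\Delta_{ij}(\bm{x},\bm{y})\,(x_j-y_j)=f_i(\bm{x})-f_i(\bm{y}),\]
and evaluating it at $(p,p')$ shows that the $n\times n$ matrix $(\Delta_{ij}(p,p'))_{i,j}$ over $k$ annihilates the nonzero vector $p-p'$, hence has determinant $0$. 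Comparing the two computations forces $c=0$; that is, $\rBez(f)=0$ in $A\otimes_k A$.

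It remains to see this is impossible. Because $f$ has finite fibers, $f^{-1}(0)$ is zero-dimensional, so $A$ is a finite-dimensional $k$-algebra; moreover $(f_1,\ldots,f_n)$ has height $n$ in the Cohen--Macaulay ring $k[\bm{x}]$, so $f_1,\ldots,f_n$ is a regular sequence and $A$ is a complete intersection. For such $A$ the structure theory of the multivariate Bézoutian (Scheja--Storch; cf.~\cite{SS75,BCRS96}) shows that $\rBez(f)$ is nonzero whenever $A\ne 0$ — it encodes a pair of dual bases for the nondegenerate Scheja--Storch residue pairing on $A$, equivalently it corresponds to $\id_A$ under the isomorphism $A\otimes_k A\cong\Hom_k(A,A)$ induced by that pairing. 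This contradicts $\rBez(f)=0$, so $|f^{-1}(0)|\le 1$, and the theorem follows.

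The telescoping identity and the invariance $\Bez(f-q)=\Bez(f)$ are routine. The part I expect to carry the real weight — and the only place the finite-fiber hypothesis is used — is the nonvanishing of $\rBez(f)$ for $A\ne 0$: here one genuinely needs the nondegeneracy of the Bézoutian/residue pairing rather than merely its formal properties.
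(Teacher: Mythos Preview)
Your argument is correct in spirit and takes a genuinely different route from the paper. The paper first shows (Proposition~\ref{prop:constant bezoutian}) that a constant $\rBez(f-q)$ forces $\dim_k k[\bm{x}]/(f-q)\le 1$, by writing $\rBez(f-q)$ in a monomial basis and using nonsingularity of the coefficient matrix $(B_{ij})$; it then invokes the Artinian structure theorem (Proposition~\ref{prop:fiber dim}) to bound $|f^{-1}(q)|$ by this dimension. You instead evaluate $\Bez(f)$ directly at a pair of preimages and use the telescoping identity to force the constant to be zero, then appeal to the nondegeneracy of the Scheja--Storch pairing to rule this out. Both proofs ultimately rest on the same nondegeneracy input, but yours bypasses the dimension count and the structure theorem in favor of a single explicit evaluation --- a cleaner path to the injectivity statement, though the paper's route yields the extra information $\dim_k k[\bm{x}]/(f-q)\le 1$.

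One gap to flag: you write ``If $f^{-1}(0)$ contains at most one $k$-point there is nothing to prove,'' but in the paper $|f^{-1}(q)|$ counts all closed points of $\mb{V}(f-q)$, not just the $k$-rational ones (see Proposition~\ref{prop:fiber dim}). Your evaluation map $\op{ev}\colon A\otimes_k A\to k$ is only defined when $p,p'\in k^n$, so as written you have only shown that the fiber has at most one $k$-rational point. The fix is easy: base change to $\bar{k}$. The B\'ezoutian, its reduction, and the constancy hypothesis are all stable under $-\otimes_k\bar{k}$, and two distinct maximal ideals of $A$ yield (via the Chinese remainder theorem) at least two distinct $\bar{k}$-points of $A\otimes_k\bar{k}$, to which your evaluation argument then applies verbatim.
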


We will also see that $f$ is injective on $k$-rational points if $\Bez(f)$ is constant.

\begin{cor}\label{cor:global inj}
If $\Bez(f)$ is constant, then $f$ is injective on $k$-rational points.
\end{cor}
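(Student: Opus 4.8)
The plan is to reduce Corollary~\ref{cor:global inj} to Theorem~\ref{thm:main}. I would fix an arbitrary $k$-rational point $q=(q_1,\ldots,q_n)$ and aim to show $|f^{-1}(q)|\le 1$; since this says that every fiber over a $k$-rational point has at most one point, it is precisely the assertion that $f$ is injective on $k$-rational points. Observing that $f^{-1}(q)$ is the fiber of $f-q$ over $0$, and that $f-q$ still has finite fibers (as $f$ does), Theorem~\ref{thm:main} applied to $f-q$ reduces the task to checking that $\rBez(f-q)$ is constant.

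The one substantive point I would establish is that the B\'ezoutian is invariant under translation of the target: $\Bez(f-q)=\Bez(f)$ for every constant $q\in k^n$. This is immediate from Definition~\ref{def:bezoutian}, since replacing $f_i$ by $f_i-q_i$ leaves each entry
\[
\Delta_{ij}=\frac{f_i(y_1,\ldots,y_{j-1},x_j,\ldots,x_n)-f_i(y_1,\ldots,y_j,x_{j+1},\ldots,x_n)}{x_j-y_j}
\]
unchanged---the constant $q_i$ cancels in the numerator---and hence leaves the matrix $(\Delta_{ij})$ and its determinant unchanged.

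It then follows that $\rBez(f-q)$ is the image of $\Bez(f)$ in the quotient $k[\bm{x},\bm{y}]/\big((f-q)(\bm{x}),(f-q)(\bm{y})\big)$. If $\Bez(f)$ is constant, say equal to $c\in k$, then this image is again constant: when the ideal $\big((f-q)(\bm{x}),(f-q)(\bm{y})\big)$ is proper, $c$ lies in the copy of $k$ inside the quotient; when it is the unit ideal, the quotient is zero and $\rBez(f-q)=0$. Either way $\rBez(f-q)$ is constant, so Theorem~\ref{thm:main} gives $|f^{-1}(q)|\le 1$, and letting $q$ vary over all $k$-rational points finishes the proof.

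Since this is a direct reduction, I do not anticipate any real obstacle. The only points that require care are tracking the definition of ``constant'' through the quotient in the degenerate case where the ideal is the unit ideal, and confirming that the finite-fiber hypothesis needed to invoke Theorem~\ref{thm:main} survives translation by $q$ (which it does, since $f$ and $f-q$ have the same fibers up to a translation of the target).
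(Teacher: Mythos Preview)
Your proposal is correct and follows essentially the same route as the paper: both arguments use the translation invariance $\Bez(f-q)=\Bez(f)$ to deduce that $\rBez(f-q)$ is constant, and then invoke Theorem~\ref{thm:main}. The only difference is cosmetic---you apply Theorem~\ref{thm:main} to $f-q$ at the origin rather than directly to $f$ at $q$, which amounts to the same thing---and you spell out more carefully the cancellation of the constants $q_i$ in the $\Delta_{ij}$ and the degenerate unit-ideal case, details the paper leaves implicit.
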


In general, the constancy of $\Bez(f)$ (or $\rBez(f-q)$) is a sufficient but not necessary condition for injectivity (see Example~\ref{ex:non-constant but injective}). In Lemma~\ref{lem:converse}, we describe circumstances under which Theorem~\ref{thm:main} gives a necessary and sufficient condition for injectivity at a rational point.

Bass, Connell, and Wright have shown that if $k$ has characteristic 0 and $\Jac(f)\in k^\times$, then $f$ is invertible if and only if $f$ is injective on $k$-rational points~\cite[Theorem 2.1]{BCW82}. In particular, Theorem~\ref{thm:main} and Lemma~\ref{lem:converse} give a reformulation of the Jacobian conjecture in characteristic 0.

\begin{cor}\label{cor:jacobian}
Let $k$ be an algebraically closed field of characteristic 0, and let $f=(f_1,\ldots,f_n)\colon\mb{A}^n_k\to\mb{A}^n_k$ be a polynomial morphism. Assume $\Jac(f)\in k^\times$. Then $\rBez(f-q)$ is constant for all $q\in\mb{A}^n_k(k)$ if and only if $f$ is invertible.
\end{cor}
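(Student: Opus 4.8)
The plan is to deduce the corollary formally from Theorem~\ref{thm:main}, Lemma~\ref{lem:converse}, and the Bass--Connell--Wright theorem, after first arranging the finiteness hypothesis that Theorem~\ref{thm:main} requires. The one preliminary observation is that $\Jac(f)\in k^\times$ makes $f$ \'etale, hence quasi-finite, so $f$ automatically has finite fibers; moreover, for every $q\in\mb{A}^n_k(k)$ the translate $f-q$ satisfies $\Jac(f-q)=\Jac(f)\in k^\times$ (the constants $q_i$ do not affect the partial derivatives) and thus $f-q$ has finite fibers as well. Consequently both Theorem~\ref{thm:main} and Lemma~\ref{lem:converse} are available for $f$ at every $k$-rational point $q$.

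For the forward implication, assume $\rBez(f-q)$ is constant for all $q\in\mb{A}^n_k(k)$. Fixing $q$ and applying Theorem~\ref{thm:main} gives $|f^{-1}(q)|\le 1$; letting $q$ range over $\mb{A}^n_k(k)$ shows that $f$ is injective on $k$-rational points. Since $k$ is algebraically closed of characteristic $0$ and $\Jac(f)\in k^\times$, \cite[Theorem~2.1]{BCW82} then gives that $f$ is invertible.

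For the reverse implication, assume $f$ is invertible, so $f$ admits a polynomial inverse and is therefore a bijection on $k^n$. Then $|f^{-1}(q)|=1$ for every $q\in\mb{A}^n_k(k)$, so $f$ is injective at each such $q$; since $\Jac(f)\in k^\times$, Lemma~\ref{lem:converse} yields that $\rBez(f-q)$ is constant. As $q$ was arbitrary, this completes the proof. The only step that is not purely formal is the preliminary reduction in the first paragraph---recognizing that the Jacobian hypothesis by itself forces finite fibers, for $f$ as well as for each translate $f-q$, which is exactly what makes Theorem~\ref{thm:main} and Lemma~\ref{lem:converse} applicable. With that in hand the corollary follows immediately from results already established together with~\cite{BCW82}, and I do not expect any further obstacle: the substantive content lies entirely in Theorem~\ref{thm:main} and in the Bass--Connell--Wright theorem.
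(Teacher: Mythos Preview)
Your proof is correct and follows exactly the route the paper indicates: the paper simply states that Corollary~\ref{cor:jacobian} follows from Theorem~\ref{thm:main}, Lemma~\ref{lem:converse}, and \cite[Theorem~2.1]{BCW82}, and you have unpacked these three ingredients in the expected way. Your preliminary observation that $\Jac(f)\in k^\times$ forces finite fibers is precisely the content of Proposition~\ref{prop:quasi finite} in the paper, so nothing is missing.
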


Note that $\Bez(f)=\Bez(f-q)$ for any $k$-rational point $q$. In particular, if $\Bez(f)$ is constant, then $\rBez(f-q)$ is constant for all $q\in\mb{A}^n_k(k)$. This gives a sufficient but not necessary criterion for the Jacobian conjecture.

\begin{cor}\label{cor:jacobian not necessary}
Let $k$ be a field of characteristic 0, and let $f=(f_1,\ldots,f_n)\colon\mb{A}^n_k\to\mb{A}^n_k$ be a polynomial morphism. Assume $\Jac(f)\in k^\times$. If $\Bez(f)$ is constant, then $f$ is invertible.
\end{cor}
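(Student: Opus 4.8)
The plan is to deduce this from Corollary~\ref{cor:jacobian} by base change to an algebraic closure. Write $\ovl{k}$ for an algebraic closure of $k$ and $f_{\ovl{k}}\colon\mb{A}^n_{\ovl{k}}\to\mb{A}^n_{\ovl{k}}$ for the induced morphism. First I would record the two elementary stability facts: the entries $\Delta_{ij}$ of Definition~\ref{def:bezoutian} are given by polynomial formulas insensitive to the base field, so $\Bez(f_{\ovl{k}})$ is the image of $\Bez(f)$ under $k[\bm{x},\bm{y}]\hookrightarrow\ovl{k}[\bm{x},\bm{y}]$; and likewise $\Jac(f_{\ovl{k}})=\Jac(f)$. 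Hence if $\Bez(f)$ is the constant $c\in k$, then $\Bez(f_{\ovl{k}})=c\in k\subseteq\ovl{k}$ and $\Jac(f_{\ovl{k}})\in\ovl{k}^\times$.

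Next I would verify the hypotheses of Corollary~\ref{cor:jacobian} for $f_{\ovl{k}}$. Since $\Jac(f_{\ovl{k}})$ is a unit, $f_{\ovl{k}}$ is \'etale, hence quasi-finite, so all of its fibers are finite. For any $q\in\mb{A}^n_{\ovl{k}}(\ovl{k})$, subtracting the constant vector $q$ does not alter the difference quotients $\Delta_{ij}$, so $\Bez(f_{\ovl{k}}-q)=\Bez(f_{\ovl{k}})=c$, and therefore $\rBez(f_{\ovl{k}}-q)=c\Mod(f_{\ovl{k}}(\bm{x})-q,f_{\ovl{k}}(\bm{y})-q)$ is constant. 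Corollary~\ref{cor:jacobian} then yields that $f_{\ovl{k}}$ is invertible.

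Finally I would descend invertibility from $\ovl{k}$ back to $k$. Let $\varphi\colon k[\bm{y}]\to k[\bm{x}]$ be the $k$-algebra homomorphism $y_i\mapsto f_i(\bm{x})$ corresponding to $f$. Then $\varphi\otimes_k\ovl{k}$ is the homomorphism corresponding to $f_{\ovl{k}}$, which by the previous step is an isomorphism; since $\ovl{k}$ is faithfully flat over $k$, the kernel and cokernel of $\varphi$ vanish after the faithfully flat base change $-\otimes_k\ovl{k}$ and hence vanish already, so $\varphi$ is an isomorphism, i.e.\ $f$ admits a polynomial inverse over $k$. I expect this last descent step to be the only point requiring any care, and even it is routine faithfully flat descent; the real content is carried by Corollary~\ref{cor:jacobian}, the role of the stronger hypothesis ``$\Bez(f)$ constant'' (versus ``$\rBez(f-q)$ constant for all $k$-rational $q$'') being precisely that it is preserved under the extension $k\subseteq\ovl{k}$.
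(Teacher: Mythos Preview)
Your argument is correct, but it takes a detour compared with the paper. The paper's proof (sketched in the paragraph just before the statement) stays over $k$: since $\Bez(f)=\Bez(f-q)$ for every $k$-rational $q$, constancy of $\Bez(f)$ forces $\rBez(f-q)$ constant for all such $q$; Corollary~\ref{cor:global inj} then gives that $f$ is injective on $k$-rational points, and \cite[Theorem~2.1]{BCW82}---which is stated for \emph{any} field of characteristic~$0$, not only algebraically closed ones---immediately yields invertibility. No base change and no descent are needed.

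Your route instead passes to $\ovl{k}$ so as to be able to invoke Corollary~\ref{cor:jacobian}, and then descends invertibility by faithful flatness. Every step is valid: the B\'ezoutian and Jacobian are indeed stable under base change, $\Bez(f_{\ovl{k}}-q)=\Bez(f_{\ovl{k}})$, and the faithfully flat descent of the isomorphism $\varphi\otimes_k\ovl{k}$ is clean. Your closing remark---that the hypothesis ``$\Bez(f)$ constant'' survives base change while ``$\rBez(f-q)$ constant for all $k$-rational $q$'' need not---is a genuine observation, and it does explain why this corollary drops the algebraically closed hypothesis of Corollary~\ref{cor:jacobian}. But note that the same explanation is available without leaving $k$: already over $k$, the unreduced hypothesis controls \emph{all} fibers over $k$-points at once, which is exactly what \cite{BCW82} needs. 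So the extra machinery of base change and descent, while correct, is not buying you anything here beyond what the direct argument already supplies.
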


The key observation leading to Theorem~\ref{thm:main} is that $\rBez(f-q)$ records information about the dimension of $k[\bm{x}]/(f-q)$ as a $k$-vector space. We will recall the relevant details about B\'ezoutians in Section~\ref{sec:bezoutians}. We will then prove Theorem~\ref{thm:main} in Section~\ref{sec:proof}. Finally, we discuss Theorem~\ref{thm:main} in the context of the Jacobian conjecture in Section~\ref{sec:discussion}.

\subsection*{Acknowledgements}
We thank Sri Iyengar for his support and feedback, as well as for helping the author catch a serious gap in the predecessor of this paper. We also thank Thomas Brazelton and Sabrina Pauli for various enlightening conversations about B\'ezoutians, and Cleto Miranda-Neto for helpful correspondence. The author received support from Kirsten Wickelgren's NSF CAREER grant (DMS-1552730).

\section{B\'ezoutians}\label{sec:bezoutians}
Throughout this section, let $f:\mb{A}^n_k\to\mb{A}^n_k$ be a morphism with finite fibers. This ensures that $(f_1,\ldots,f_n)$ is a complete intersection ideal, which allows us to utilize the multivariate B\'ezoutian~\cite[Section 3]{BCRS96}.

\begin{rem}\label{rem:bezoutian}
As noted by Scheja--Storch~\cite[p. 182]{SS75} and Becker--Cardinal--Roy--Szafraniec~\cite{BCRS96}, the B\'ezoutian records information about the dimension of $k[\bm{x}]/(f)$ as a $k$-vector space. To see this, consider the isomorphism
\[\mu\colon \frac{k[\bm{x}]}{(f)}\otimes_k \frac{k[\bm{x}]}{(f)}\to \frac{k[\bm{x},\bm{y}]}{(f(\bm{x}),f(\bm{y}))}\]
defined by $\mu(a(\bm{x})\otimes b(\bm{x}))=a(\bm{x})b(\bm{y})$. The inverse is characterized by $\mu^{-1}(x_i)=x_i\otimes 1$ and $\mu^{-1}(y_i)=1\otimes x_i$. Since $\mu$ is an isomorphism, there is an element $B\in k[\bm{x}]/(f)\otimes_k k[\bm{x}]/(f)$ such that $\mu(B)=\rBez(f)$. Moreover, given a basis $\{c_i\}$ for $k[\bm{x}]/(f)$, there exists a basis $\{d_i\}$ for $k[\bm{x}]/(f)$ such that $B=\sum_i c_i\otimes d_i$~\cite[Theorem 2.10(iii)]{BCRS96}.
\end{rem}

\begin{prop}\label{prop:constant bezoutian}
If $\rBez(f-q)$ is constant, then $\dim_k k[\bm{x}]/(f-q)\leq 1$.
\end{prop}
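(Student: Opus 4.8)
The plan is to read off the dimension bound directly from the tensor-theoretic description of the reduced B\'ezoutian recorded in Remark~\ref{rem:bezoutian}, applied to the morphism $g:=f-q$. First I would note that $g$ still has finite fibers, since $g^{-1}(p)=f^{-1}(p+q)$ for every $p$; hence the standing hypotheses of Section~\ref{sec:bezoutians} and all of Remark~\ref{rem:bezoutian} apply to $g$. Set $I:=(g_1,\ldots,g_n)\subseteq k[\bm x]$ and $V:=k[\bm x]/I=k[\bm x]/(f-q)$; by the finite-fiber hypothesis $V$ is a finite-dimensional $k$-vector space. By Remark~\ref{rem:bezoutian} there is an element $B\in V\otimes_kV$ with $\mu(B)=\rBez(g)=\rBez(f-q)$, and for any chosen basis $c_1,\ldots,c_d$ of $V$ there is a basis $d_1,\ldots,d_d$ of $V$ with $B=\sum_{i=1}^d c_i\otimes d_i$.

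Next I would dispose of the degenerate case. If $I$ is not a proper ideal, then $V=0$, so $\dim_k V=0\le1$ and there is nothing to prove. So I may assume $I$ is proper; then $1\ne0$ in $V$ and $d:=\dim_k V\ge1$, and the hypothesis ``$\rBez(f-q)$ is constant'' means precisely $\rBez(f-q)=c$ for some scalar $c\in k$. Since $\mu(1\otimes1)=1$ and $\mu$ is $k$-linear and bijective, this translates into the tensor identity $B=c\,(1\otimes1)$ in $V\otimes_kV$, where $1$ denotes the class of $1$ in $V$.

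The heart of the argument is then a rank comparison between the two expressions for $B$. I would encode $B$ as the $k$-linear map $\beta\colon\Hom_k(V,k)\to V$, $\beta(\vphi)=(\vphi\otimes\id_V)(B)$. Using $B=\sum_{i=1}^d c_i\otimes d_i$ gives $\beta(\vphi)=\sum_{i=1}^d\vphi(c_i)d_i$; since $c_1,\ldots,c_d$ is a basis, $(\vphi(c_1),\ldots,\vphi(c_d))$ runs over all of $k^d$ as $\vphi$ varies, so $\im\beta=\op{span}_k\{d_1,\ldots,d_d\}=V$ and hence $\rank\beta=d$. Using $B=c\,(1\otimes1)$ instead gives $\beta(\vphi)=c\,\vphi(1)\cdot1$, whose image lies in the line $k\cdot1$, so $\rank\beta\le1$. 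Comparing the two computations yields $d\le1$, i.e.\ $\dim_k k[\bm x]/(f-q)\le1$.

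The one point needing care — more a matter of bookkeeping than a genuine obstacle — is extracting from \cite[Theorem 2.10(iii)]{BCRS96} the statement that $B$ is a sum of $d$ simple tensors built from two \emph{bases} of $V$, together with the elementary tensor-rank fact that such a tensor cannot also be written using fewer than $d$ simple tensors (which is exactly the rank comparison above). Everything else — the reduction to $g$, the finite-dimensionality of $V$, the degenerate case, and the identity $\mu^{-1}(c)=c\,(1\otimes1)$ — is routine.
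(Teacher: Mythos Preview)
Your argument is correct and rests on the same key input as the paper's proof: the non-degeneracy of the reduced B\'ezoutian as a bilinear object on $V=k[\bm x]/(f-q)$. The paper expands $\rBez(f-q)=\sum_{i,j}B_{ij}c_i(\bm x)c_j(\bm y)$ in a monomial basis, invokes \cite[Theorem 1.2]{BMP21} to conclude that $(B_{ij})$ is non-singular, and then counts non-zero entries; you instead take the equivalent statement from Remark~\ref{rem:bezoutian} (that $B=\sum_i c_i\otimes d_i$ with $\{c_i\}$ and $\{d_i\}$ both bases) and compare tensor ranks directly. The two versions of the non-degeneracy input are interchangeable, so the approaches are essentially the same; your tensor-rank formulation is slightly cleaner in that it avoids the choice of a monomial basis and the counting of non-zero terms.
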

\begin{proof}
We will prove that if $\rBez(f)$ is constant, then $\dim_k k[\bm{x}]/(f)\leq 1$. The same proof holds after replacing $f$ with $f-q$. Given any basis $\{c_1,\ldots,c_m\}$ of $k[\bm{x}]/(f)$, write $\rBez(f)=\sum_{i,j}B_{ij}c_i(\bm{x})c_j(\bm{y})$, where $B_{ij}\in k$. The $m\times m$ matrix $(B_{ij})$ is non-singular by \cite[Theorem 1.2]{BMP21}, so $(B_{ij})$ must contain at least $m$ non-zero entries. In particular, the number of non-zero terms of $\sum_{i,j}B_{ij}c_i(\bm{x})c_j(\bm{y})$ is at least $m=\dim_k k[\bm{x}]/(f)$. 

First, suppose $\rBez(f)\in k^\times$. Pick a monomial basis $\{c_1,\ldots,c_m\}$ of $k[\bm{x}]/(f)$, so that $\sum_{i,j}B_{ij}c_i(\bm{x})c_j(\bm{y})$ consists of a single non-zero term, so $1\geq\dim_k k[\bm{x}]/(f)$ (and in fact, equality holds). Next, if $\rBez(f)=0$, then $m=\dim_k k[\bm{x}]/(f)=0$.
\end{proof}

Let $\Jac(f):=\det(\frac{\partial f_i}{\partial x_j})$ be the Jacobian of $f$, and let $\delta\colon k[\bm{x},\bm{y}]\to k[\bm{x}]$ be given by $\delta(a(\bm{x},\bm{y}))=a(\bm{x},\bm{x})$. We can recover $\Jac(f)$ from $\Bez(f)$. This appears in \cite[p. 184]{SS75} and, modulo $(f)$, in \cite[p. 90]{BCRS96}, but we recall the details here.

\begin{prop}\label{prop:bezoutian gives jacobian}
We have $\delta(\Bez(f))=\Jac(f)$.
\end{prop}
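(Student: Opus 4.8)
The plan is to use that $\delta$ is a ring homomorphism, hence commutes with the determinant, and then to identify each entry $\delta(\Delta_{ij})$ with a partial derivative of $f_i$.

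First I would observe that $\delta\colon k[\bm{x},\bm{y}]\to k[\bm{x}]$, $a(\bm{x},\bm{y})\mapsto a(\bm{x},\bm{x})$, is a $k$-algebra homomorphism, so it commutes with taking determinants of matrices with entries in $k[\bm{x},\bm{y}]$. Thus
\[\delta(\Bez(f))=\delta(\det(\Delta_{ij}))=\det(\delta(\Delta_{ij})),\]
and it suffices to prove that $\delta(\Delta_{ij})=\partial f_i/\partial x_j$ for every $i,j$.

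Next I would compute $\delta(\Delta_{ij})$ by recognizing $\Delta_{ij}$ as a univariate divided difference. Fix $i$ and $j$ and set $g(t):=f_i(y_1,\ldots,y_{j-1},t,x_{j+1},\ldots,x_n)$, a polynomial in the single variable $t$ with coefficients in $k[y_1,\ldots,y_{j-1},x_{j+1},\ldots,x_n]$. By Definition~\ref{def:bezoutian}, $\Delta_{ij}=(g(x_j)-g(y_j))/(x_j-y_j)$. Writing $g(t)=\sum_\ell a_\ell t^\ell$ and using the identity $(s^\ell-t^\ell)/(s-t)=\sum_{p=0}^{\ell-1}s^p t^{\ell-1-p}$, one sees at once that $\Delta_{ij}$ is a genuine polynomial and that setting $x_j=y_j$ in it yields $\sum_\ell \ell\,a_\ell\,y_j^{\ell-1}=g'(y_j)$. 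Since $\Delta_{ij}$ is a polynomial, $\delta(\Delta_{ij})$ may be computed by performing the substitutions $y_m=x_m$ in any order: substituting $y_m=x_m$ for $m<j$ turns $g$ into $\tilde g(t):=f_i(x_1,\ldots,x_{j-1},t,x_{j+1},\ldots,x_n)$, and then substituting $y_j=x_j$ gives
\[\delta(\Delta_{ij})=\tilde g'(x_j)=\frac{\partial f_i}{\partial x_j}(\bm{x}).\]

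Combining the two steps gives $\delta(\Bez(f))=\det(\partial f_i/\partial x_j)=\Jac(f)$, as desired. There is no serious obstacle here; the only point requiring care is bookkeeping — checking that $\Delta_{ij}$ is honestly a polynomial, so that $\delta$ acts on it termwise, and that evaluating it at $\bm{y}=\bm{x}$ recovers the partial derivative rather than some other value. The divided-difference identity above settles both issues simultaneously.
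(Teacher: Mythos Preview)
Your proof is correct and follows essentially the same strategy as the paper: both use that $\delta$ is a ring homomorphism to reduce to the entrywise claim $\delta(\Delta_{ij})=\partial f_i/\partial x_j$, then verify this by recognizing $\Delta_{ij}$ as a univariate divided difference whose diagonal value is the derivative. The only cosmetic difference is that the paper extracts this last fact via the product rule applied to $(x_j-y_j)\cdot\Delta_{ij}$, while you expand explicitly using $(s^\ell-t^\ell)/(s-t)=\sum_{p=0}^{\ell-1}s^p t^{\ell-1-p}$.
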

\begin{proof}
Note that $\delta$ is a ring homomorphism, so it suffices to show that $\delta(\Delta_{ij})=\frac{\partial f_i}{\partial x_j}$. The result follows by taking a formal partial derivative, as we now explain. Let
\[f_{ij}(\bm{x},y_j)=\frac{f_i(\bm{x})-f_i(x_1,\ldots,x_{j-1},y_j,x_{j+1},\ldots,x_n)}{x_j-y_j},\]
so that $\delta(\Delta_{ij})=f_{ij}(\bm{x},x_j)$. Since $\Delta_{ij}$ is a polynomial, $\delta(\Delta_{ij})$ and $f_{ij}$ are polynomials as well. Now
\begin{align*}
\frac{\partial f_i}{\partial x_j}&=\frac{\partial}{\partial x_j}\left(f_i(\bm{x})-f(x_1,\ldots,x_{j-1},y_j,x_{j+1},\ldots,x_n)\right)\\
&=\frac{\partial}{\partial x_j}\left(f_{ij}(\bm{x},y_j)\cdot(x_j-y_j)\right)\\
&=\frac{\partial f_{ij}}{\partial x_j}\cdot(x_j-y_j)+f_{ij}(\bm{x},y_j).
\end{align*}
Thus
\begin{align*}
\delta\left(\frac{\partial f_i}{\partial x_j}\right)&=\delta\left(\frac{\partial f_{ij}}{\partial x_j}\cdot(x_j-y_j)+f_{ij}(\bm{x},y_j)\right)\\
&=0+f_{ij}(\bm{x},x_j)\\
&=\delta(\Delta_{ij}).
\end{align*}
Since $\frac{\partial f_i}{\partial x_j}\in k[\bm{x}]$, we have $\delta(\frac{\partial f_i}{\partial x_j})=\frac{\partial f_i}{\partial x_j}$, which proves the desired result.
\end{proof}

\begin{rem}
If $\Jac(f)\in k^\times$ and $\rBez(f)$ is constant, then Proposition~\ref{prop:bezoutian gives jacobian} implies that $\rBez(f)\neq 0$.
\end{rem}

\begin{ex}
Let $f=(x_1^2,x_2^2,x_3^2)$. The set $\{1,x_1,x_2,x_3,x_1x_2,x_1x_3,x_2x_3,x_1x_2x_3\}$ is a basis for $k[\bm{x}]/(f)$. Let
\begin{align*}
B&=x_1x_2x_3\otimes 1+x_2x_3\otimes x_1+x_1x_3\otimes x_2+x_1x_2\otimes x_3\\
&+x_1\otimes x_2x_3+x_2\otimes x_1x_3+x_3\otimes x_1x_2+1\otimes x_1x_2x_3.
\end{align*}
By Definition~\ref{def:bezoutian}, we have
\begin{align*}
\Bez(f)&=x_1x_2x_3+x_2x_3y_1+x_1x_3y_2+x_1x_2y_3\\
&+x_1y_2y_3+x_2y_1y_3+x_3y_1y_2+y_1y_2y_3.
\end{align*}
One can readily check that $\mu(B)=\Bez(f)$. Moreover, $\delta(\Bez(f))=8x_1x_2x_3$, which is equal to $\Jac(f)$ (see Proposition~\ref{prop:bezoutian gives jacobian}).
\end{ex}

\section{Proof of Theorem~\ref{thm:main}}\label{sec:proof}
Let $q=(q_1,\ldots,q_n)\in\mb{A}^n_k$ be a $k$-rational point. As a consequence of the structure theorem for Artinian rings, the dimension of $k[\bm{x}]/(f-q)$ as a $k$-vector space is closely related to the fiber cardinality $|f^{-1}(q)|$.

\begin{prop}\label{prop:fiber dim}
Let $(f-q)=(f_1-q_1,\ldots,f_n-q_n)$ be an ideal in $k[\bm{x}]$. Suppose that $f^{-1}(q)=\{p_1,\ldots,p_m\}$ is a finite set of points. Then $\dim_k k[\bm{x}]/(f-q)\geq |f^{-1}(q)|$.
\end{prop}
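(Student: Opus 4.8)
The plan is to reduce the statement to the structure theorem for Artinian rings. Write $A := k[\bm{x}]/(f-q)$, a finitely generated $k$-algebra whose spectrum is the fiber $f^{-1}(q)$. Since this fiber is finite, $A$ is a zero-dimensional Noetherian ring, hence Artinian, and in particular $\dim_k A<\infty$. The points $p_1,\dots,p_m$ of $f^{-1}(q)$ are exactly the closed points of $\Spec A$, so they correspond bijectively to the maximal ideals $\mf{m}_1,\dots,\mf{m}_m$ of $A$; here $f(p_i)=q$ means $(f-q)\subseteq\mf{m}_{p_i}$, where $\mf{m}_{p_i}\subseteq k[\bm{x}]$ is the maximal ideal corresponding to $p_i$, which is why each $p_i$ genuinely defines a maximal ideal of the quotient ring $A$.

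First I would invoke the structure theorem: an Artinian ring is the direct product of its localizations at its maximal ideals, so $A\cong\prod_{i=1}^m A_{\mf{m}_i}$, and hence $\dim_k A=\sum_{i=1}^m\dim_k A_{\mf{m}_i}$. Next I would bound each factor from below: $A_{\mf{m}_i}$ is nonzero, being a localization at a prime ideal, and a nonzero algebra over the field $k$ contains $k$, so $\dim_k A_{\mf{m}_i}\geq 1$. Summing over $i$ gives $\dim_k A\geq m=|f^{-1}(q)|$, which is the claim.

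If one prefers to avoid the structure theorem, the same bound drops out of the Chinese remainder theorem: the $\mf{m}_i$ are pairwise comaximal, so there is a surjection $A\twoheadrightarrow\prod_{i=1}^m A/\mf{m}_i$, each $A/\mf{m}_i$ is a field extension of $k$ (and equals $k$ when $p_i$ is $k$-rational), and $\dim_k\prod_{i=1}^m A/\mf{m}_i=\sum_{i=1}^m\dim_k A/\mf{m}_i\geq m$. Either way the argument is routine; the only points that need attention are the bijection between the $p_i$ and the maximal ideals of $A$ and the nonvanishing of the local (resp. quotient) factors, both of which are immediate. I do not expect a genuine obstacle here: the proposition is essentially a repackaging of standard commutative algebra.
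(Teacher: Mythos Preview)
Your argument is correct and follows essentially the same route as the paper: both observe that $k[\bm{x}]/(f-q)$ is Artinian because its spectrum is finite, invoke the structure theorem to decompose it as a product of its localizations at the maximal ideals corresponding to the $p_i$, and conclude by summing dimensions. You make the final step---that each local factor has $k$-dimension at least $1$---more explicit than the paper does, and your Chinese remainder alternative is a nice bonus, but there is no substantive difference in strategy.
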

\begin{proof}
Since $f^{-1}(q)=\mb{V}(f-q)$ is a finite set, $k[\bm{x}]/(f-q)$ is Artinian by \cite[\href{https://stacks.math.columbia.edu/tag/00KH}{Lemma 00KH}]{stacks}. Let $\mf{m}_i$ be the maximal ideal in $k[\bm{x}]$ corresponding to $p_i$. By the structure theorem for Artinian rings (see \cite[\href{https://stacks.math.columbia.edu/tag/00JA}{Lemma 00JA}]{stacks} or \cite[Theorem 8.7]{AM69}), there is an isomorphism
\[\frac{k[\bm{x}]}{(f-q)}\cong\prod_{i=1}^m\frac{k[\bm{x}]_{\mf{m}_i}}{(f-q)}.\]
Thus $\dim_k k[\bm{x}]/(f-q)=\sum_{i=1}^m\dim_k k[\bm{x}]_{\mf{m}_i}/(f-q)$, which implies the claim.
\end{proof}

We are now prepared to prove Theorem~\ref{thm:main} and Corollary~\ref{cor:global inj}.

\begin{proof}[Proof of Theorem~\ref{thm:main}]
By Proposition~\ref{prop:constant bezoutian}, we have $\dim_k k[\bm{x}]/(f-q)\leq 1$, so Proposition~\ref{prop:fiber dim} implies that $|f^{-1}(q)|\leq 1$.
\end{proof}

\begin{proof}[Proof of Corollary~\ref{cor:global inj}]
Note that for any $k$-rational point $q$, we have $\Bez(f-q)=\Bez(f)$. Thus if $\Bez(f)\in k$, then $\rBez(f-q)\in k$ for all $q\in\mb{A}^n_k(k)$. By Theorem~\ref{thm:main}, $f$ is injective on $k$-rational points.
\end{proof}

\section{Dru\.zkowski morphisms with constant B\'ezoutian}\label{sec:discussion}
If we assume that $\Jac(f)\in k$, then we get slightly stronger injectivity results. By the work of Bass, Connell, and Wright \cite[Theorem 2.1]{BCW82}, we can study the Jacobian conjecture by studying the injectivity of morphisms with $\Jac(f)\in k^\times$. We start with the following standard result.

\begin{prop}\label{prop:quasi finite}
If $f\colon\mb{A}^n_k\to\mb{A}^n_k$ has $\Jac(f)\in k^\times$, then $f$ has finite fibers.
\end{prop}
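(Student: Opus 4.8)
The plan is to show that $f$ is unramified and then invoke the standard fact that an unramified morphism of finite type has finite fibers.

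Write $A=k[\bm{x}]$ and $B=k[y_1,\dots,y_n]$, and regard $A$ as a $B$-algebra via $y_i\mapsto f_i(\bm{x})$, so that $f$ is the induced map $\Spec A\to\Spec B$. First I would compute the module of relative Kähler differentials $\Omega_{A/B}$. The cotangent exact sequence attached to $k\to B\to A$ reads
\[
A\otimes_B\Omega_{B/k}\xrightarrow{\ \varphi\ }\Omega_{A/k}\longrightarrow\Omega_{A/B}\longrightarrow 0,
\]
and since $\Omega_{B/k}=\bigoplus_{i=1}^n B\,dy_i$ and $\Omega_{A/k}=\bigoplus_{j=1}^n A\,dx_j$ are free $A$-modules of rank $n$ with $\varphi(1\otimes dy_i)=\sum_j\tfrac{\partial f_i}{\partial x_j}\,dx_j$, the map $\varphi$ is represented by the Jacobian matrix $\bigl(\tfrac{\partial f_i}{\partial x_j}\bigr)$. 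Its determinant is $\Jac(f)$, which by hypothesis lies in $k^\times$ and is therefore a unit of $A$; hence $\varphi$ is an isomorphism and $\Omega_{A/B}=0$. In other words, $f$ is unramified.

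The remainder is bookkeeping. An unramified morphism is locally of finite type and locally quasi-finite (see the section on unramified morphisms in \cite{stacks}), and since $\mb{A}^n_k\to\mb{A}^n_k$ is visibly of finite type — it corresponds to a finitely generated $k$-algebra map — it follows that $f$ is quasi-finite, so every fiber of $f$ is a finite set. I do not anticipate a genuine obstacle; the only steps needing care are orienting the cotangent sequence correctly and identifying $\varphi$ with the Jacobian, after which the conclusion is purely formal.

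If one prefers to avoid quoting the theory of unramified morphisms, the same computation suffices directly: for a point $q\in\mb{A}^n_k$ with residue field $\kappa$, base changing $B\to\kappa$ along $q$ gives $\Omega_{(A\otimes_B\kappa)/\kappa}=\Omega_{A/B}\otimes_A(A\otimes_B\kappa)=0$. A finite type $\kappa$-algebra $C$ with $\Omega_{C/\kappa}=0$ has, at each minimal prime $\mf{p}$, residue field with vanishing differential module over $\kappa$, which forces $\op{trdeg}_\kappa\kappa(\mf{p})=0$ and hence $\kappa(\mf{p})$ finite over $\kappa$; thus $C$ is a Noetherian ring of dimension $0$, hence Artinian, so $\Spec C=f^{-1}(q)$ is finite.
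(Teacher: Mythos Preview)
Your argument is correct and follows essentially the same route as the paper: compute that the relative K\"ahler differentials vanish because the Jacobian matrix has unit determinant, deduce that $f$ is unramified, and then cite the standard implications unramified $\Rightarrow$ locally quasi-finite $\Rightarrow$ quasi-finite $\Rightarrow$ finite fibers. Your presentation via the cotangent sequence for $k\to B\to A$ is slightly more explicit than the paper's (which abbreviates by writing $\Omega_{X/k}$ for $X=k[\bm{x}]/(f)$), and your alternative fiberwise argument is a nice addition, but the substance is the same.
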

\begin{proof}
Let $X=k[\bm{x}]/(f)$, and recall that the module of K\"ahler differentials $\Omega_{X/k}$ is the cokernel of the Jacobian matrix $(\frac{\partial f_i}{\partial x_j})$. Since $\Jac(f)\in k^\times$, we have that $\Omega_{X/k}=0$ and hence $f$ is unramified. By \cite[\href{https://stacks.math.columbia.edu/tag/02V5}{Lemma 02V5}]{stacks}, $f$ is locally quasi-finite. Since $\mb{A}^n_k$ is Noetherian, $f\colon\mb{A}^n_k\to\mb{A}^n_k$ is quasi-compact. Thus~\cite[\href{https://stacks.math.columbia.edu/tag/01TJ}{Lemma 01TJ}]{stacks} implies that $f$ is quasi-finite. In particular, $f$ has finite fibers~\cite[\href{https://stacks.math.columbia.edu/tag/02NH}{Lemma 02NH}]{stacks}.
\end{proof}

\begin{rem}
The statement that $f$ has finite fibers is equivalent to $\mb{V}(f-q)$ being a finite set for all $q$. Assuming $\Jac(f)\in k^\times$, it was shown by van den Essen~\cite[Theorem 1.1.32]{vdE00} that $|\mb{V}(f-q)|\leq [k(\bm{x}):k(\bm{f})]$. Miranda-Neto also proved the finiteness of $\mb{V}(f-q)$ using derivations and differentials~\cite[Theorem 3.1]{MN19}.
\end{rem}

We saw in Proposition~\ref{prop:fiber dim} that $\dim_k k[\bm{x}]/(f-q)\geq |f^{-1}(q)|$. Assuming that $k$ is algebraically closed of characteristic 0 and $\Jac(f)\in k^\times$, this inequality is an equality.

\begin{prop}\label{prop:dim=fiber degree}
Let $k$ be an algebraically closed field of characteristic 0. If $f\colon\mb{A}^n_k\to\mb{A}^n_k$ has $\Jac(f)\in k^\times$, then $\dim_k k[\bm{x}]/(f-q)=|f^{-1}(q)|$. (See also~\cite[Corollary 3.2]{MN19}.)
\end{prop}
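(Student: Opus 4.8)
The plan is to show that each local factor $k[\bm{x}]_{\mf{m}_i}/(f-q)$ appearing in the decomposition from Proposition~\ref{prop:fiber dim} is one-dimensional over $k$, so that summing gives $\dim_k k[\bm{x}]/(f-q) = m = |f^{-1}(q)|$. The key input is that $\Jac(f)\in k^\times$ forces $f$ to be unramified (as shown in the proof of Proposition~\ref{prop:quasi finite}), and hence $f$ is étale, being also flat — though for the purely local statement it is cleaner to argue directly that the map is smooth of relative dimension $0$, or equivalently that the fiber is reduced. Since $k$ is algebraically closed, each $p_i$ is a $k$-point, so the residue field at $\mf{m}_i$ is $k$ itself.

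First I would recall that $f^{-1}(q) = \mb{V}(f-q)$ is finite by Proposition~\ref{prop:quasi finite}, so $k[\bm{x}]/(f-q)$ is Artinian and decomposes as $\prod_{i=1}^m k[\bm{x}]_{\mf{m}_i}/(f-q)$ as in Proposition~\ref{prop:fiber dim}; write $A_i := k[\bm{x}]_{\mf{m}_i}/(f-q)$, a local Artinian $k$-algebra with residue field $k$. Next, the crucial step: I would argue that $A_i$ is a field, in fact $A_i \cong k$. The cleanest route is via Kähler differentials: since $\Jac(f)\in k^\times$, the module $\Omega_{(k[\bm{x}]/(f-q))/k}$ vanishes (it is the cokernel of the Jacobian matrix, which is invertible — note the Jacobian of $f-q$ equals that of $f$). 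Localizing, $\Omega_{A_i/k} = 0$, so $A_i$ is a finite separable (indeed formally unramified) $k$-algebra; a finite unramified algebra over a field is a product of finite separable field extensions, and being local with residue field $k$ and $k$ algebraically closed, $A_i \cong k$. Hence $\dim_k A_i = 1$ for every $i$, and summing over the $m$ points yields $\dim_k k[\bm{x}]/(f-q) = m = |f^{-1}(q)|$.

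The main obstacle — or rather the point requiring the most care — is justifying that a finite local $k$-algebra $A$ with $\Omega_{A/k}=0$ must equal $k$ when $k$ is algebraically closed of characteristic $0$. One clean way is: if $\mf{m}_A$ denotes the maximal ideal of $A$, then $\Omega_{A/k}=0$ combined with the conormal sequence gives $\mf{m}_A/\mf{m}_A^2 = 0$, so by Nakayama $\mf{m}_A = 0$ and $A$ is a field; being finite over $k$ algebraically closed forces $A=k$. The role of characteristic $0$ (and algebraic closedness) here is to make "unramified" genuinely equivalent to "reduced fiber with trivial residue extensions" — in positive characteristic one would also need to worry about inseparability, but $\Omega_{A/k}=0$ already rules that out, so in fact the characteristic-$0$ hypothesis is only mildly used and the algebraically-closed hypothesis does the real work of pinning residue fields down to $k$. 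I would also remark that this proposition makes Proposition~\ref{prop:constant bezoutian} and Proposition~\ref{prop:fiber dim} combine into a clean equivalence, which is presumably the content of the forthcoming Lemma~\ref{lem:converse}.
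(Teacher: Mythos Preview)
Your argument is correct and reaches the same conclusion as the paper, but the key step---showing each local factor $A_i=k[\bm{x}]_{\mf{m}_i}/(f-q)$ is isomorphic to $k$---is handled differently. The paper offers two routes: first, it invokes a result of Scheja--Storch that $\Jac(f)$ generates the socle of $A_i$, so a unit in the socle forces $\mf{m}_{A_i}=0$; second, it observes that $\mb{V}(f-q)$ is smooth of dimension $0$, hence regular, and then applies the Cohen structure theorem to identify the completion with $k$. Your approach via $\Omega_{A_i/k}=0\Rightarrow A_i$ is unramified (hence \'etale) over $k\Rightarrow A_i$ is a finite separable field extension of $k\Rightarrow A_i=k$ is more elementary than either, requiring neither the Scheja--Storch citation nor the Cohen structure theorem. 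As you correctly observe, your argument does not actually use $\Char k=0$; the paper's first route, by contrast, genuinely needs it for the Scheja--Storch socle statement.

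One small point of care: in your conormal-sequence variant, the map $\mf{m}_A/\mf{m}_A^2\to\Omega_{A/k}\otimes_A k$ is not injective in general---it is here because the surjection $A\to k$ is split by the structure map $k\to A$, which makes the second fundamental sequence left exact. You should say this explicitly if you use that version. Your alternative phrasing (finite unramified over a field $\Rightarrow$ product of separable field extensions) sidesteps this issue entirely and is the cleanest way to present the argument.
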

\begin{proof}
We need to show that if $p\in f^{-1}(q)$ with corresponding maximal ideal $\mf{m}$, then $\dim_k k[\bm{x}]_\mf{m}/(f-q)=1$. Since $f$ has finite fibers, $k[\bm{x}]_\mf{m}/(f-q)$ is a local Artin ring. In particular, $k[\bm{x}]_\mf{m}/(f-q)$ is a finitely generated algebra over its residue field. Moreover, the residue field is $k$, since $k$ is algebraically closed and $\mf{m}$ is a closed point. We will conclude by showing that $k[\bm{x}]_{\mf{m}}/(f-q)$ is in fact a field and noting that any finitely generated $k$-algebra is isomorphic to $k$ (since $k$ is algebraically closed).

Since $k$ has characteristic 0, \cite[(4.7) Korollar]{SS75} implies that $\Jac(f)$ generates the socle of $k[\bm{x}]_\mf{m}/(f-q)$, which is the annihilator of the maximal ideal $\mf{m}$. That is, the maximal ideal of $k[\bm{x}]_\mf{m}/(f-q)$ is annihilated by a scalar, so this maximal ideal must be the zero ideal. In particular, $k[\bm{x}]_\mf{m}/(f-q)$ is a field.

Alternatively, one can note that $\Jac(f)\in k^\times$ implies that $\mb{V}(f-q)$ is smooth as an affine scheme. Moreover, $\mb{V}(f-q)$ has Krull dimension zero by Proposition~\ref{prop:quasi finite}. Since $\Char{k}=0$, it follows that $\mb{V}(f-q)$ is regular, so $k[\bm{x}]_\mf{m}/(f-q)$ is a regular local ring over an algebraically closed field. By the Cohen structure theorem, the $\mf{m}$-adic completion of $k[\bm{x}]_\mf{m}/(f-q)$ is a ring of power series over $k$ in 0 generators (i.e. $k$ itself), so $k[\bm{x}]_\mf{m}/(f-q)\cong k$ as rings. But this suffices to prove that $\dim_k k[\bm{x}]_\mf{m}/(f-q)=1$.
\end{proof}

\begin{rem}
In the course of Proposition~\ref{prop:dim=fiber degree}, we have shown that if $k$ is a field of characteristic 0 and $f\colon\mb{A}^n_k\to\mb{A}^n_k$ has $\Jac(f)\in k^\times$, then the ideal $(f_1-q_1,\ldots,f_n-q_n)\subset k[\bm{x}]$ is radical. Indeed, the structure theorem for Artinian rings allows us to decompose $k[\bm{x}]/(f-q)$ as a product of local Artinian rings, each of which is a field by~\cite[(4.7) Korollar]{SS75}. In particular, $(f-q)$ is a finite intersection of maximal ideals and is hence radical. This gives a proof of \cite[Theorem 3.1]{MN19} not relying on derivations or differentials, as asked by Miranda-Neto~\cite[Remark 3.3]{MN19}.
\end{rem}

If $\Jac(f)\in k^\times$, we get a converse to Theorem~\ref{thm:main} (assuming $k$ is algebraically closed with $\Char{k}=0$).

\begin{lem}\label{lem:converse}
Let $k$ be an algebraically closed field of characteristic 0.  If $\Jac(f)\in k^\times$ and $f$ is injective at $q\in\mb{A}^n_k(k)$, then $\rBez(f-q)$ is constant.
\end{lem}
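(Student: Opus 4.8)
The plan is to run the chain of implications in the excerpt in reverse. The hypotheses are that $k$ is algebraically closed of characteristic $0$, that $\Jac(f) \in k^\times$, and that $f$ is injective at $q$, meaning $|f^{-1}(q)| \leq 1$. By Proposition~\ref{prop:quasi finite}, the assumption $\Jac(f) \in k^\times$ guarantees $f$ has finite fibers, so all the machinery of Section~\ref{sec:bezoutians} applies to $f$ (and to $f - q$, which has the same Jacobian). First I would invoke Proposition~\ref{prop:dim=fiber degree} to conclude $\dim_k k[\bm{x}]/(f-q) = |f^{-1}(q)| \leq 1$. So the only two cases are $\dim_k k[\bm{x}]/(f-q) = 0$, i.e.\ $(f-q)$ is the unit ideal, and $\dim_k k[\bm{x}]/(f-q) = 1$.

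In the first case, $\rBez(f-q)$ is an element of the zero ring, hence constant by definition (case (ii) of the definition of constant). In the second case, $k[\bm{x}]/(f-q) \cong k$ as $k$-algebras, with basis $\{1\}$. Running the argument of Remark~\ref{rem:bezoutian} or Proposition~\ref{prop:constant bezoutian}: write $\rBez(f-q) = \sum_{i,j} B_{ij} c_i(\bm{x}) c_j(\bm{y})$ relative to this basis; since the basis is just $\{1\}$, the sum collapses to a single scalar $B_{11} \in k$. Hence $\rBez(f-q)$ is represented by an element of $k$ inside $k[\bm{x},\bm{y}]/(f(\bm{x})-q, f(\bm{y})-q)$, which is exactly what it means for $\rBez(f-q)$ to be constant. (One could also note that by the remark following Proposition~\ref{prop:bezoutian gives jacobian}, since $\Jac(f) \in k^\times$ we in fact have $B_{11} \neq 0$, but this refinement is not needed for the statement.)

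I do not expect a serious obstacle here: the lemma is essentially the assembly of Propositions~\ref{prop:quasi finite} and~\ref{prop:dim=fiber degree} with the basis-counting observation already used in the proof of Proposition~\ref{prop:constant bezoutian}. The one point that warrants a sentence of care is handling both cases $\dim = 0$ and $\dim = 1$ uniformly under the term ``constant'' — the $\dim = 0$ case needs the convention that elements of the zero ring are constant, and the $\dim = 1$ case needs that the $1 \times 1$ Gram-type matrix $(B_{11})$ is (non-singular, hence) a single scalar. Both are immediate from the definitions recalled in the excerpt, so the proof should be short.
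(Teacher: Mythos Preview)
Your proposal is correct and follows essentially the same route as the paper: invoke Proposition~\ref{prop:dim=fiber degree} to get $\dim_k k[\bm{x}]/(f-q)\leq 1$, then handle the two cases $\dim=0$ and $\dim=1$ separately. The only cosmetic difference is that in the $\dim=1$ case the paper directly notes $k[\bm{x},\bm{y}]/(f(\bm{x})-q,f(\bm{y})-q)\cong k\otimes_k k\cong k$, whereas you phrase the same observation via the single-term basis expansion $B_{11}$.
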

\begin{proof}
Since $f$ is injective at $q$, Proposition~\ref{prop:dim=fiber degree} implies that $\dim_k k[\bm{x}]/(f-q)\leq 1$. If $\dim_k k[\bm{x}]/(f-q)=0$, then $\rBez(f-q)=0\in k$. If $\dim_k k[\bm{x}]/(f-q)=1$, then $k[\bm{x},\bm{y}]/(f(\bm{x})-q,f(\bm{y})-q)\cong k\otimes_k k\cong k$. Thus $\rBez(f-q)\in k$, as desired.
\end{proof}

Corollary~\ref{cor:jacobian} follows from Theorem~\ref{thm:main}, Lemma~\ref{lem:converse}, and \cite[Theorem 2.1]{BCW82}. Using Corollary~\ref{cor:jacobian not necessary}, we can prove the Jacobian conjecture for any morphism whose B\'ezoutian is a constant. An important class of morphisms to consider are Dru\.zkowski morphisms.

\begin{defn}
A morphism $f\colon\mb{A}^n_k\to\mb{A}^n_k$ is called a \textit{Dru\.zkowski morphism} if $f$ is of the form $(x_1+(\sum_{i=1}^n a_{1i}x_i)^3,\ldots,x_n+(\sum_{i=1}^na_{ni}x_i)^3)$ with $\Jac(f)\in k^\times$. We also say that $f$ is the Dru\.zkowski morphism \textit{determined by} the matrix $(a_{ij})$.
\end{defn}

It was proved by Dru\.zkowski \cite[Theorem 3]{Dru83} that if the Jacobian conjecture is true for all Dru\.zkowski morphisms over a field $k$ of characteristic 0, then the Jacobian conjecture is true over $k$. By \cite[(1.1) Remark 4]{BCW82}, the Jacobian conjecture over $\mb{C}$ implies the Jacobian conjecture over all fields of characteristic 0.

If $(a_{ij})$ is strictly upper triangular or strictly lower triangular, then the Dru\.zkowski morphism determined by $(a_{ij})$ has constant B\'ezoutian. This allows us to recover the well-known solution of the Jacobian conjecture for such morphisms~\cite[Theorem 1.8]{Tru15}:

\begin{prop}\label{prop:strictly upper triangular}
Let $k$ be an algebraically closed field of characteristic 0. Suppose $a_{ij}=0$ either for all $i\geq j$ or for all $i\leq j$. Then the morphism 
\[\textstyle f:=(x_1+(\sum_{j=1}^n a_{1j}x_j)^3,\ldots,x_n+(\sum_{j=1}^n a_{nj}x_j)^3)\colon\mb{A}^n_k\to\mb{A}^n_k\]
is invertible and has $\Jac(f)=1$.
\end{prop}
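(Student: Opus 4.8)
The plan is to show directly that $\Bez(f)=1$ and then invoke Corollary~\ref{cor:jacobian not necessary}. First I would record the shape of $f$. If $a_{ij}=0$ for all $i\leq j$, then $\sum_{j=1}^n a_{ij}x_j=\sum_{j<i}a_{ij}x_j$ involves only $x_1,\ldots,x_{i-1}$, so $f_i=x_i+g_i$ with $g_i=(\sum_{j<i}a_{ij}x_j)^3\in k[x_1,\ldots,x_{i-1}]$; dually, if $a_{ij}=0$ for all $i\geq j$, then $f_i=x_i+g_i$ with $g_i=(\sum_{j>i}a_{ij}x_j)^3\in k[x_{i+1},\ldots,x_n]$. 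In either case the Jacobian matrix $(\partial f_i/\partial x_j)$ is triangular with $1$'s on the diagonal, so $\Jac(f)=1\in k^\times$; hence $f$ is a Dru\.zkowski morphism and, by Proposition~\ref{prop:quasi finite}, has finite fibers, so its B\'ezoutian is defined.

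Next I would show that the matrix $(\Delta_{ij})$ of Definition~\ref{def:bezoutian} is triangular. Consider the case $a_{ij}=0$ for all $i\leq j$; the other case is symmetric, interchanging the roles of the index sets $\{1,\ldots,i\}$ and $\{i,\ldots,n\}$ and of the inequalities $j>i$ and $j<i$. Since $f_i\in k[x_1,\ldots,x_i]$, for $j>i$ both $f_i(y_1,\ldots,y_{j-1},x_j,\ldots,x_n)$ and $f_i(y_1,\ldots,y_j,x_{j+1},\ldots,x_n)$ are obtained from $f_i$ by substituting $y_1,\ldots,y_i$ into its first $i$ slots, hence are equal; thus the numerator defining $\Delta_{ij}$ vanishes and $\Delta_{ij}=0$. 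So $(\Delta_{ij})$ is lower triangular and $\Bez(f)=\prod_{i=1}^n\Delta_{ii}$.

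To finish, I would compute the diagonal entries. The two arguments of $f_i$ appearing in the numerator of $\Delta_{ii}$, namely $(y_1,\ldots,y_{i-1},x_i,x_{i+1},\ldots,x_n)$ and $(y_1,\ldots,y_{i-1},y_i,x_{i+1},\ldots,x_n)$, differ only in the $i$-th slot, and $f_i-x_i=g_i$ does not involve $x_i$; hence the numerator equals $x_i-y_i$, so $\Delta_{ii}=1$. Therefore $\Bez(f)=1$, which is constant (and consistent with $\delta(\Bez(f))=\Jac(f)=1$ via Proposition~\ref{prop:bezoutian gives jacobian}). Since $\Char k=0$, $\Jac(f)\in k^\times$, and $\Bez(f)$ is constant, Corollary~\ref{cor:jacobian not necessary} gives that $f$ is invertible.

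I do not anticipate a serious obstacle: the content is the index bookkeeping that identifies exactly which off-diagonal entries of $(\Delta_{ij})$ vanish, together with checking that the strictly-upper-triangular case is handled by the mirror image of the same computation (here $f_i\in k[x_i,\ldots,x_n]$, which forces $\Delta_{ij}=0$ for $j<i$ instead). If one prefers not to repeat the argument, one can reduce the second case to the first by conjugating $f$ with the coordinate-reversing permutation $x_\ell\mapsto x_{n+1-\ell}$, which turns a strictly-upper-triangular $f$ into a strictly-lower-triangular one and preserves both invertibility and the value $\Jac(f)=1$.
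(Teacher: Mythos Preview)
Your proposal is correct and follows essentially the same approach as the paper: show that the matrix $(\Delta_{ij})$ is triangular with $1$'s on the diagonal, conclude $\Bez(f)=1$, and invoke Corollary~\ref{cor:jacobian not necessary}. The only differences are cosmetic---you spell out the index bookkeeping that the paper leaves implicit, you compute $\Jac(f)=1$ directly from the Jacobian matrix rather than deducing it from $\delta(\Bez(f))$, and you treat the strictly lower triangular case first whereas the paper begins with the strictly upper triangular case.
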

\begin{proof}
First suppose $a_{ij}=0$ for all $i\geq j$. Since $a_{ij}=0$ for $i>j$, we have $\Delta_{ij}=0$ for $i>j$. Since $a_{ii}=0$ for all $i$, we have $\Delta_{ii}=1$ for all $i$. Thus $\Bez(f)=\Jac(f)=1$, and $f$ is invertible by Corollary~\ref{cor:jacobian not necessary}. Symmetrically, if $a_{ij}=0$ for all $i\leq j$, then we again have $\Bez(f)=\Jac(f)=1$.
\end{proof}

Proposition~\ref{prop:strictly upper triangular} follows from \cite[Theorem 5]{Dru83} when the rank of $(a_{ij})$ is 0, 1, 2, or $n-1$. As mentioned in \cite[Remark 6]{Dru83}, $f$ is a Dru\.zkowski morphism (in particular, $\Jac(f)\in k^\times$) only if $\rank(a_{ij})<n$. More strongly, Dru\.zkowski proved that if the Jacobian conjecture is true for all Dru\.zkowski morphisms with $(a_{ij})^2=0$, then the Jacobian conjecture is true in general \cite[Theorem 2]{Dru01}. 

Since every nilpotent matrix is similar to a strictly upper triangular matrix, and since an invertible matrix $P$ determines an automorphism $P\colon\mb{A}^n_k\to\mb{A}^n_k$ given by $P(\bm{x})=P\bm{x}^T$, Proposition~\ref{prop:strictly upper triangular} gives a potential to approach the Jacobian conjecture~\cite{Mei95}. Given an invertible matrix $P$ and a Dru\.zkowski morphism $f$ determined by $(a_{ij})$, we would like to find automorphisms $S,T\colon\mb{A}^n_k\to\mb{A}^n_k$ such that $S\circ f\circ T$ is the Dru\.zkowski morphism determined by $P(a_{ij})P^{-1}$. It suffices to reduce to the case where $P$ is an elementary matrix. If $P$ is a permutation matrix, then finding $S,T$ is straightforward.

\begin{prop}\cite[Proposition 3.1]{GTGZ99}\label{prop:permutation}
Let $P$ be a permutation matrix. If $f$ is the Dru\.zkowski morphism determined by $(a_{ij})$, then $P\circ f\circ P^{-1}$ is the Dru\.zkowski morphism determined by $P(a_{ij})P^{-1}$.
\end{prop}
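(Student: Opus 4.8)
The plan is to write the Dru\.zkowski morphism in a coordinate-free form and then exploit the elementary fact that a permutation matrix commutes with any map that is applied componentwise. Throughout, I would identify a point of $\mb{A}^n_k$ with its coordinate column vector, so that an invertible matrix $M$ acts by $\bm{x}\mapsto M\bm{x}$. Set $A=(a_{ij})$ and let $c\colon\mb{A}^n_k\to\mb{A}^n_k$ be the componentwise cubing map $c(u_1,\ldots,u_n)=(u_1^3,\ldots,u_n^3)$; by definition the Dru\.zkowski morphism $f$ determined by $A$ is $f(\bm{x})=\bm{x}+c(A\bm{x})$, and it comes with the standing hypothesis $\Jac(f)\in k^\times$.

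The key observation is that $P\circ c=c\circ P$ for every permutation matrix $P$: applying $P$ merely permutes the coordinates while $c$ acts identically in each coordinate, so performing them in either order gives the same result. (More generally, $P$ commutes with any map of the form $(u_i)_i\mapsto(g(u_i))_i$.) Granting this, I would compute directly
\[
(P\circ f\circ P^{-1})(\bm{x})=P\bigl(P^{-1}\bm{x}+c(AP^{-1}\bm{x})\bigr)=\bm{x}+P\,c(AP^{-1}\bm{x})=\bm{x}+c\bigl((PAP^{-1})\bm{x}\bigr),
\]
using linearity of $P$ for the second equality and $P\circ c=c\circ P$ for the third. Reading off coordinates, $P\circ f\circ P^{-1}$ has exactly the shape $(x_1+(\sum_j b_{1j}x_j)^3,\ldots,x_n+(\sum_j b_{nj}x_j)^3)$ with $(b_{ij})=PAP^{-1}$.

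Finally I would check the Jacobian condition so that $P\circ f\circ P^{-1}$ is genuinely a Dru\.zkowski morphism. By the chain rule and multiplicativity of the determinant, $\Jac(P\circ f\circ P^{-1})(\bm{x})=\det(P)\cdot\Jac(f)(P^{-1}\bm{x})\cdot\det(P^{-1})$; since $\Jac(f)$ is a constant in $k^\times$ it does not depend on the point, and $\det(P)\det(P^{-1})=1$, so this equals $\Jac(f)\in k^\times$. Hence $PAP^{-1}$ determines the Dru\.zkowski morphism $P\circ f\circ P^{-1}$, as claimed. I do not expect a real obstacle: the single load-bearing point is the commutation $P\circ c=c\circ P$, and the only thing to watch is keeping the matrix/coordinate conventions consistent — in particular matching whichever convention for $P(\bm{x})=P\bm{x}^T$ one adopts between the linear part $A$, the conjugator $P$, and the componentwise cubing map.
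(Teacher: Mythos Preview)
Your argument is correct. Note that the paper does not supply its own proof of this proposition; it simply quotes the result from the cited reference. Your coordinate-free computation, hinging on the commutation $P\circ c=c\circ P$ of a permutation matrix with the componentwise cubing map, is the natural direct verification and constitutes a complete proof.
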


Paired with Proposition~\ref{prop:strictly upper triangular}, we recover \cite[Theorem 3.2]{GTGZ99}. However, Meisters proved that not all nilpotent matrices are cubic similar to a strictly upper triangular matrix \cite{Mei95}, which suggests that finding such automorphisms $S,T$ is not trivial. Indeed, the obvious trick does not quite work when $P$ is a row multiplication matrix, as we show in Remark~\ref{rem:row mult}. Row addition matrices seem to be even more problematic than row multiplication matrices.

\begin{rem}\label{rem:row mult}
Let $\bm{e}_i$ be the $i^\text{th}$ standard column vector. Let $D_i=(\bm{e}_1\ \cdots\ m\bm{e}_i\ \cdots\ \bm{e}_n)$ for some $m\in k^\times$. If $f$ is the Dru\.zkowski morphism determined by $(a_{ij})$, then $D_i^3\circ f\circ D_i^{-1}$ has matrix $D_i(a_{ij})D_i^{-1}$ but is not quite a Dru\.zkowski morphism. Indeed, we compute
\begin{align*}
D_i^3\circ f\circ D_i^{-1}&=(\textstyle x_1+(m^{-1}a_{1i}x_i+\sum_{j\neq i}a_{1j}x_j)^3,\ldots,\\
&\qquad\textstyle m^2x_i+(a_{ii}x_i+\sum_{j\neq i}ma_{ij}x_j)^3,\ldots,\\
&\qquad\textstyle x_n+(m^{-1}a_{ni}x_i+\sum_{j\neq i}a_{nj}x_j)^3).
\end{align*}
This is a Dru\.zkowski morphism if and only if $m=\pm 1$, since we need $m^2x_i=x_i$.
\end{rem}

By Propositions~\ref{prop:strictly upper triangular} and~\ref{prop:permutation} and Remark~\ref{rem:row mult}, we get the following corollary.

\begin{cor}\label{cor:conjugate}
If $(a_{ij})$ is conjugate to a strictly upper triangular matrix via permutation matrices and $\pm 1$ row multiplications, then the Dru\.zkowski morphism determined by $(a_{ij})$ is invertible.
\end{cor}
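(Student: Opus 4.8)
The plan is to induct on the number of elementary matrices used in the conjugation, tracking at each step how the associated Dru\.zkowski morphism changes via Proposition~\ref{prop:permutation} and Remark~\ref{rem:row mult}, and then to invoke Proposition~\ref{prop:strictly upper triangular} at the end of the chain.

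Write the hypothesis as: there is a product $P=P_\ell\cdots P_1$, with each $P_i$ either a permutation matrix or a row multiplication matrix with $m=\pm1$ (in the notation of Remark~\ref{rem:row mult}), such that $P(a_{ij})P^{-1}$ is strictly upper triangular. Set $M_0=(a_{ij})$ and $M_i=P_iM_{i-1}P_i^{-1}$, so $M_\ell$ is strictly upper triangular, and let $f=f_0$ be the Dru\.zkowski morphism determined by $M_0$. The inductive claim I would prove is: for each $i$, the matrix $M_i$ determines a Dru\.zkowski morphism $f_i$ (in particular $\Jac(f_i)\in k^\times$), and $f_i=S_i\circ f\circ T_i$ for some linear automorphisms $S_i,T_i$ of $\mb{A}^n_k$. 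The base case $i=0$ is the hypothesis on $f$. For the inductive step, if $P_i$ is a permutation matrix then Proposition~\ref{prop:permutation} shows $P_i\circ f_{i-1}\circ P_i^{-1}$ is the Dru\.zkowski morphism determined by $M_i$; if $P_i$ is a row multiplication matrix $D$ with $m=\pm1$, then Remark~\ref{rem:row mult} shows $D^3\circ f_{i-1}\circ D^{-1}$ is a Dru\.zkowski morphism with matrix $M_i$ (this is precisely the case $m=\pm1$ where the equality $m^2=1$ of Remark~\ref{rem:row mult} holds, so the conjugated morphism stays Dru\.zkowski). In either case $f_i=A_i\circ f_{i-1}\circ B_i$ with $A_i,B_i$ linear automorphisms of $\mb{A}^n_k$ (namely $P_i,P_i^{-1}$ or $D^3,D^{-1}$), so composing with the inductive hypothesis gives $f_i=(A_i\circ S_{i-1})\circ f\circ(T_{i-1}\circ B_i)$, completing the step.

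Applying the claim with $i=\ell$ gives $S_\ell\circ f\circ T_\ell=f_\ell$, the Dru\.zkowski morphism determined by the strictly upper triangular matrix $M_\ell$. By Proposition~\ref{prop:strictly upper triangular}, $f_\ell$ is invertible, and since $S_\ell$ and $T_\ell$ are automorphisms, $f=S_\ell^{-1}\circ f_\ell\circ T_\ell^{-1}$ is invertible as well, which is the desired conclusion.

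The only place requiring care is the bookkeeping that keeps every $f_i$ a genuine Dru\.zkowski morphism: Proposition~\ref{prop:permutation} and Remark~\ref{rem:row mult} both require their input to be a Dru\.zkowski morphism, so the induction hypothesis must carry this property along, and the restriction to $\pm1$ row multiplications is exactly what guarantees it at each stage (a non-$\pm1$ multiplication would produce a coordinate of the form $m^2x_j$ and break the Dru\.zkowski form, as Remark~\ref{rem:row mult} shows). Beyond this, the argument is a routine composition of automorphisms, and there is no genuine analytic or algebraic obstacle.
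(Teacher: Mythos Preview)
Your proposal is correct and matches the paper's approach. The paper does not write out a proof at all; it simply declares the corollary to follow from Propositions~\ref{prop:strictly upper triangular} and~\ref{prop:permutation} together with Remark~\ref{rem:row mult}, and your induction on the length of the conjugating word is precisely the natural way to chain those three ingredients. The only point you might tighten is the verification that $\Jac(f_i)\in k^\times$ in the row-multiplication step, but this is immediate from the chain rule since $f_i$ is a composition of $f_{i-1}$ with linear automorphisms.
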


\begin{ex}\label{ex:non-constant but injective}
If all entries of $(a_{ij})$ are non-negative real numbers, then $(a_{ij})$ is nilpotent if and only if it is permutation-similar to a strictly upper triangular matrix. Unfortunately, Corollary~\ref{cor:conjugate} does not cover all Dru\.zkowski morphisms. For example, the matrix
\[A=\begin{pmatrix}0&1&0\\-1&0&1\\0&1&0\end{pmatrix}\]
is not conjugate to a strictly upper triangular matrix via permutation matrices and $\pm 1$ row multiplications \cite[(1.6)]{GTGZ99}. Moreover, the Dru\.zkowski morphism $f_A$ determined by $A$ has $\Bez(f_A)\not\in k$, so one cannot hope for Corollary~\ref{cor:jacobian not necessary} to be of use for general Dru\.zkowski morphisms. Nevertheless, the Jacobian conjecture is true for $f_A$, so $\rBez(f_A-q)=1$ for all $q$.
\end{ex}

It is known that Dru\.zkowski morphisms are injective at the origin \cite[Proposition 1]{Dru83}, so $\rBez(f)=1$ for any Dru\.zkowski morphism $f$ by Lemma~\ref{lem:converse}. Indeed, $\Jac(f)=1$ for any Dru\.zkowski morphism, so $1\equiv\Jac(f)\Mod(f)$. Since $\rBez(f)\in k$ and $\delta$ is injective on $k$ (see Section~\ref{sec:bezoutians}), we have $\rBez(f)=1$. Similarly, $\Jac(f-q)=1$ for any $k$-rational point $q\in\mb{A}^n_k$, so if $\rBez(f-q)$ is constant, then $\rBez(f-q)=1$.

\begin{ques}\label{ques}
Given a Dru\.zkowski morphism $f$, is $\rBez(f-q)=1$ for all $q\in\mb{A}^n_k(k)$?
\end{ques}

Since $\rBez(f)=1$, we have $\Bez(f)\equiv 1\Mod(f(\bm{x}),f(\bm{y}))$. Since $\Bez(f)=\Bez(f-q)$ for any $q$, Question~\ref{ques} is asking if $\Bez(f)\equiv 1\Mod(f(\bm{x})-q,f(\bm{y})-q)$ for all $q$. 

We include a basic Sage script \cite{code} that takes as input a matrix $A$ and a $k$-rational point $q\in\mb{A}^n_k$ and returns as output $\Jac(f_A-q)$ and $\rBez(f_A-q)$ of the corresponding Dru\.zkowski morphism $f_A$. This script is based off of code written jointly with Thomas Brazelton and Sabrina Pauli for computing the $\mb{A}^1$-degree via the B\'ezoutian \cite{BMPcode}.

\bibliography{bezoutian-jacobian}{}
\bibliographystyle{alpha}
\end{document}